\newlength\imagewidth
\newlength\imagescale
\newtheorem{Theorem}{Theorem}[section]
\newtheorem{Proposition}[Theorem]{Proposition}
\newtheorem{Lemma}[Theorem]{Lemma}
\newtheorem{Corollary}[Theorem]{Corollary}
\newtheorem{Remark}[Theorem]{Remark}
\newcommand{\RR}{{{\rm I} \kern -.15em {\rm R} }}
\newcommand{\C}{{{\rm l} \kern -.50em {\rm C} }}
\newcommand{\nat}{{{\rm I} \kern -.15em {\rm N} }}
\newcommand{\be}{\begin{equation}}
\newcommand{\ee}{\end{equation}}
\newcommand{\beq}{\begin{eqnarray}}
\newcommand{\eeq}{\end{eqnarray}}
\newcommand{\beqs}{\begin{eqnarray*}}
\newcommand{\eeqs}{\end{eqnarray*}}
\newcommand{\bt}{\begin{Theorem}}
\newcommand{\et}{\end{Theorem}}
\newcommand{\br}{\begin{Remark}}
\newcommand{\er}{\end{Remark}}
\newcommand{\bc}{\begin{Corollary}}
\newcommand{\ec}{\end{Corollary}}
\newcommand{\bl}{\begin{Lemma}}
\newcommand{\el}{\end{Lemma}}
\newcommand{\bd}{\begin{definition}}
\newcommand{\ed}{\end{definition}}
\renewcommand{\geq}{\geqslant}
\renewcommand{\leq}{\leqslant}
\newcommand{\W}{\mathcal{H}}
\title{Well-posedness and stability for semilinear wave-type equations with time delay}
\author{
\and
Alessandro Paolucci\footnote{Dipartimento di Ingegneria e Scienze dell'Informazione e Matematica, Universit\`{a} di L'Aquila, Via Vetoio, Loc. Coppito, 67010 L'Aquila Italy (\texttt{alessandro.paolucci2@graduate.univaq.it}).}
\and
Cristina Pignotti\footnote{Dipartimento di Ingegneria e Scienze dell'Informazione e Matematica, Universit\`{a} di L'Aquila, Via Vetoio, Loc. Coppito, 67010 L'Aquila Italy (\texttt{cristina.pignotti@univaq.it}).}
}
\date{}
\begin{document}

\textwidth=160 mm

\textheight=225mm

\parindent=8mm

\frenchspacing

\maketitle

\begin{abstract}
In this paper we analyze a semilinear abstract damped wave-type equation with time delay. We assume that the delay feedback coefficient is variable in time and belonging to $L^1_{loc}([0, +\infty)).$ Under suitable assumptions, we show well-posedness and exponential stability for \emph{small} initial data. Our strategy combines careful energy estimates and continuity arguments.
Some examples illustrate the abstract results.
\end{abstract}

\vspace{5 mm}

\def\qed{\hbox{\hskip 6pt\vrule width6pt
height7pt
depth1pt  \hskip1pt}\bigskip}

\section{Introduction}
Let $H$ be an Hilbert space and let $A:{\mathcal D}(A)\subset H \to H$ be a positive self-adjoint operator with dense domain and compact inverse in $H.$ Let us consider the following wave-type equation:
\begin{equation}\label{modello}
\begin{array}{l}
\displaystyle{u_{tt}(t)+A u(t)+CC^*u_t(t)+k(t)BB^* u_t(t-\tau)=\nabla \psi(u(t)),\quad t\geq 0,}\\
\displaystyle{u(0)=u_0, \quad u_t(0)=u_1,}\\ 
\displaystyle{B^*u_t(s)=g(s), \quad s\in [-\tau,0], }
\end{array}
\end{equation}
where $\tau> 0$ is the time delay, the damping coefficients $k(\cdot)$ is a function in $L^1_{loc}([0,+\infty))$ and $(u_0, u_1, g)$ are the initial data in suitable spaces. Moreover, for given real Hilbert spaces $W_1$ and $W_2$ that  will be identified with their dual spaces,  $C:W_1\to H$ and $B:W_2\to H$ are bounded linear operators  with adjoint   $C^*$ and  $B^*$ respectively.  
We assume that the damping operator $CC^*$ satisfies a control geometric property (see e.g. \cite{Bardos} or \cite[Chapter 5]{K}). Moreover, on the delay feedback coefficient, we assume 
\begin{equation}\label{damp_coeff}
\int_{t-\tau}^t |k(s)|ds\leq K, \quad \forall \ t\ge 0,
\end{equation}
for some $K>0.$

Furthermore, $\psi : {\mathcal D}(A^{\frac 1 2})\rightarrow \RR$ is a functional having G\^{a}teaux derivative $D\psi(u)$ at every $u\in {\mathcal D}(A^{\frac 12}).$ In the spirit of \cite{ACS}, we assume the following hypotheses:
\begin{itemize}
\item[{(H1)}] For every $u\in {\mathcal D}(A^{\frac 1 2})$, there exists a constant $c(u)>0$ such that
$$
|D\psi(u)(v)|\leq c(u) ||v||_{{H}} \qquad \forall v\in {\mathcal D}(A^{\frac 1 2}).
$$
Then, $\psi$ can be extended to the whole  $H$ and
 we denote by $\nabla \psi(u)$ the unique vector representing $D\psi(u)$ in the Riesz isomorphism, i.e.
$$
\langle \nabla \psi(u), v \rangle_H =D\psi(u) (v), \qquad \forall v\in H;
$$
\item[ (H2)] for all $r>0$ there exists a constant $L(r)>0$ such that
$$
||\nabla \psi (u)-\nabla \psi (v)||_H \leq L(r) ||A^{\frac 12}(u-v)||_H,
$$
for all $u,v\in {\mathcal D}(A^{\frac 12})$ satisfying $||A^{\frac 12} u||_H\leq r$ and $||A^{\frac 12} v||_H\leq r$.
\item[{ (H3)}] $\psi(0)=0,$  $\nabla \psi(0)=0$ and
there exists a strictly increasing continuous function $h$ such that
\begin{equation}
\label{stima_h}
||\nabla \psi (u)||_H\leq h(||A^{\frac 12} u||_H)||A^{\frac 12}u||_H,
\end{equation}
for all $u\in {\mathcal D}(A^{\frac 12})$.
\end{itemize}
We are interested in studying the well-posedness of system \eqref{modello} and in proving an exponential  stability estimate for solutions corresponding to small initial data, under a suitable assumption involving the model's parameters. A linear version of such a model has been first studied in \cite{SCL12} where a wave equation with frictional damping and delay feedback with constant coefficient has been analyzed, proving an exponential decay estimate under a suitable smallness condition on the delay term coefficient. This result has then  been extended to linear  wave equations with boundary dissipation (see \cite{ANP10}) and with viscoelastic damping (see \cite{AlNP, guesmia}). We quote also  \cite{JEE15, NicaisePignotti18, KP} for related stability results for abstract semilinear evolution equations. However,   in the nonlinear setting, the results previously obtained require that the damping operator $CC^*$ contrasts, in the spirit of \cite{NPSicon06} (cf. also \cite{XYL}), the delay feedback. Indeed, in \cite{JEE15, NicaisePignotti18}, where the delay coefficient is constant, i.e. $k(t)\equiv k,$  in order to have a not increasing energy, it is assumed $\vert k\vert<\frac 1 {\mu}$
and
$$ \Vert B^*u\Vert_{W_2}\le \mu  \Vert C^*u\Vert_{W_1}, \quad \forall u\in {\mathcal D}(A^{\frac 12}). $$
In \cite{KP} the coefficient $k$ is time dependent, as here, but it is assumed
that 
$$
k(t)=k^1(t)+k^2(t),
$$
with  $k^1\in L^1([0,+\infty))$, $k^2\in L^\infty ([0,+\infty))$, and
$||k^2||_\infty$ smaller than a suitable constant depending on the damping operator $CC^*.$
Actually, in \cite{KP}, the model involves a finite number of time delays $\tau_1,\dots, \tau_l.$ Here, for sake of clarity, we consider only a delay feedback. However, our analysis could be easily extended to more than one delay term.

Stability results in presence of delay feedback have also been obtained for specific models, with  $k$ constant, mainly in the linear setting (see e.g. \cite{Ait, AG, AM, Chentouf, Dai, Oquendo, Said}).

The rest of the paper is organized as follows. In section \ref{exp} we rewrite system \eqref{modello} in an abstract form and we give an exponential stability result under an appropriate well-posedness assumption. Then, we show that the well-posedness assumption is satisfied for model \eqref{modello} and so the exponential decay estimate holds for {\em small} initial data. Section \ref{Examples} illustrates some concrete examples for which the abstract theory is applicable.

\section{Exponential stability}
\label{exp}\hspace{5mm}

\setcounter{equation}{0}
Before proving the exponential stability of system \eqref{modello}, we rewrite it in an abstract way. Let us introduce the Hilbert space
$$
\W=\mathcal{D}(A^{\frac 12}) \times H,
$$
endowed with the inner product
$$
\left\langle
\left (
\begin{array}{l}
u\\
v
\end{array}
\right ),
\left (
\begin{array}{l}
\tilde u\\
\tilde v
\end{array}
\right )
\right\rangle_{\W}:= \langle A^{\frac 12}u, A^{\frac 12} \tilde u\rangle_H+\langle v, \tilde v\rangle_H.
$$
If we denote $v(t)=u_t(t)$ and $U(t):=(u(t),v(t))^T$, we can rewrite system \eqref{modello} in the following abstract form
\begin{equation}\label{abstract_form}
\begin{array}{l}
\displaystyle{U'(t)=\mathcal AU(t)- k(t)\mathcal BU(t-\tau)+F(U(t)),}\\
\displaystyle{U(0)=U_0,}\\
\displaystyle{\mathcal BU(t)=f(t), \quad t\in[-\tau,0],}
\end{array}
\end{equation}
where 
$$
\mathcal A=
\begin{pmatrix}
0 & 1 \\ 
-A & -CC^*
\end{pmatrix}, \quad 
\mathcal BU(t)= \left( \begin{array}{l} \hspace{0.7 cm}0 \\ BB^* v(t)\end{array} \right) \quad \text{and} \quad F(U(t))=\left( \begin{array}{l} \hspace{0.65 cm} 0 \\ \nabla \psi(u(t)) \end{array}\right).
$$
We know that, under controllability assumptions on the  damping operator $CC^*$ (see for instance \cite{Bardos, K}), $\mathcal A$ generates an exponentially stable $C_0$-semigroup $\{ S(t)\}_{t\geq 0}$, namely there exist $M,\omega >0$ such that
\begin{equation}\label{decay_semigroup}
||S(t)||_{\mathcal L(\mathcal H)} \leq Me^{-\omega t}, \quad \forall t\geq 0.
\end{equation}
Moreover, the previous hypotheses (H2)-(H3) on $\psi$ imply the following properties on $F$:
\begin{itemize}
\item[(F1)] $F(0)=0$;
\item[(F2)] for any $r>0$ there exists a constant $L(r)>0$ such that
$$
||F(U)-F(V)||_\W\leq L(r) ||U-V||_\W,
$$
whenever $||U||_\W,||V||_\W\leq r$.
\end{itemize}
Let us denote
\begin{equation}\label{normab}
\Vert B\Vert_{{\mathcal L}(\W)}=\Vert B^*\Vert_{{\mathcal L}(\W)}=b.
\end{equation}
Then, by \eqref{normab}, 
\begin{equation}\label{normabcal}
\Vert\mathcal B\Vert_{{\mathcal L}(\W)}=b^2.
\end{equation}
We will prove well-posedness and exponential stability for system \eqref{modello}, for small initial data, under the assumption

\begin{equation}\label{assumption_delay}
M  b^2e^{\omega\tau} \int_0^t |k(s+\tau)|ds  \leq \gamma+\omega' t, \quad \forall \ t>0,
\end{equation}
for suitable constants $\gamma\geq 0$ and $\omega'\in[0,\omega).$ 

First, we will give an exponential decay result for the abstract model \eqref{abstract_form} under a suitable well-posedness assumption.

\begin{Theorem}\label{generaleCV}
Assume \eqref{assumption_delay}.
Moreover, suppose that 
\begin{itemize}
\item[{(I)}] there exist $\rho>0$, $C_\rho>0$,  with $L(C_\rho)<\frac{\omega-\omega '}{M}$ such that if $U_0 \in \W$ and if $f\in C([-\tau ,0];\W)$ satisfy
\begin{equation}\label{well-posedness}
||U_0||^2_{\W}+ \int_0^{\tau} |k(s)| \cdot ||f(s-\tau)||^2_{\W} ds <\rho ^2,
\end{equation}
then the system \eqref{abstract_form} has a unique solution $U\in C([0,+\infty);\W)$ satisfying $||U(t)||_{\W}\leq C_\rho$ for all $t>0$.
\end{itemize}
Then, for every solution $U$ of \eqref{abstract_form}, with initial data $(U_0, f)$ satisfying \eqref{well-posedness},
\begin{equation}
\label{stimaesponenziale}
||U(t)||_{\W}\leq Me^\gamma \left (||U_0||_{\W}+\int_0^{\tau} e^{\omega s} |k(s)|\cdot ||f(s-\tau)||_{\W} ds\right )e^{-(\omega -\omega'-ML(C_{\rho}))t}, 
\end{equation}
for any $t\geq 0$.
\end{Theorem}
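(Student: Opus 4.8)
The plan is to use the variation-of-constants (Duhamel) representation of the solution to \eqref{abstract_form} together with the semigroup decay \eqref{decay_semigroup}, and then close a Gronwall-type argument whose feedback term is controlled precisely by the integral assumption \eqref{assumption_delay}. First, I would fix initial data $(U_0,f)$ satisfying \eqref{well-posedness}; by hypothesis (I) there is a unique global solution $U\in C([0,+\infty);\W)$ with $\|U(t)\|_\W\le C_\rho$ for all $t>0$. Writing the equation in mild form,
\[
U(t)=S(t)U_0-\int_0^t S(t-s)\,k(s)\,\mathcal B U(s-\tau)\,ds+\int_0^t S(t-s)F(U(s))\,ds,
\]
I would split the delay integral at $s=\tau$, using $\mathcal B U(s-\tau)=f(s-\tau)$ for $s\in[0,\tau]$ and $\mathcal B U(s-\tau)$ with $\|\mathcal B\|_{\mathcal L(\W)}=b^2$ (by \eqref{normabcal}) for $s>\tau$. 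Taking $\W$-norms and using \eqref{decay_semigroup}, (F1)–(F2) (so $\|F(U(s))\|_\W\le L(C_\rho)\|U(s)\|_\W$ since $\|U(s)\|_\W\le C_\rho$), this gives
\[
\|U(t)\|_\W\le Me^{-\omega t}\|U_0\|_\W+M\int_0^{\tau}e^{-\omega(t-s)}|k(s)|\,\|f(s-\tau)\|_\W\,ds
+Mb^2\int_\tau^t e^{-\omega(t-s)}|k(s)|\,\|U(s-\tau)\|_\W\,ds
+ML(C_\rho)\int_0^t e^{-\omega(t-s)}\|U(s)\|_\W\,ds.
\]

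Next I would introduce the weighted quantity $z(t):=e^{\omega t}\|U(t)\|_\W$ to absorb the exponentials. Multiplying through by $e^{\omega t}$ yields
\[
z(t)\le M\|U_0\|_\W+M\int_0^{\tau}e^{\omega s}|k(s)|\,\|f(s-\tau)\|_\W\,ds
+Mb^2\int_\tau^t e^{\omega s}|k(s)|\,\|U(s-\tau)\|_\W\,ds
+ML(C_\rho)\int_0^t z(s)\,ds.
\]
In the delay term I substitute $s\mapsto s+\tau$, so $\int_\tau^t e^{\omega s}|k(s)|\,\|U(s-\tau)\|_\W\,ds = e^{\omega\tau}\int_0^{t-\tau}|k(s+\tau)|\,z(s)\,ds$. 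Setting $c_0:=M\|U_0\|_\W+M\int_0^{\tau}e^{\omega s}|k(s)|\,\|f(s-\tau)\|_\W\,ds$ and $\bar z(t):=\sup_{0\le s\le t}z(s)$, and bounding $z(s)\le\bar z(t)$ inside both integrals, I get
\[
\bar z(t)\le c_0+Mb^2 e^{\omega\tau}\Big(\int_0^{t}|k(s+\tau)|\,ds\Big)\bar z(t)+ML(C_\rho)\,t\,\bar z(t),
\]
which is where \eqref{assumption_delay} is needed only in a pointwise-Gronwall form; but to get the stated decay one must instead keep the integral factor inside a Gronwall inequality rather than pulling out $\bar z(t)$.

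More precisely, the cleaner route — and the step I expect to be the technical crux — is to apply the integral Gronwall lemma directly to
\[
z(t)\le c_0+\int_0^{t}\big(Mb^2 e^{\omega\tau}|k(s+\tau)|+ML(C_\rho)\big)z(s)\,ds,
\]
which is legitimate after replacing $z(s-\tau)$ by $z(s)$ in the delay term using $z$ is nondecreasing up to the shift — this needs a small monotonicity bookkeeping, handling $s<\tau$ by the initial-history contribution already folded into $c_0$. Gronwall then yields
\[
z(t)\le c_0\exp\!\Big(\int_0^t\big(Mb^2 e^{\omega\tau}|k(s+\tau)|+ML(C_\rho)\big)\,ds\Big)
\le c_0\, e^{\gamma+\omega' t}\, e^{ML(C_\rho)t},
\]
where the last inequality is exactly \eqref{assumption_delay}. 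Unwinding $z(t)=e^{\omega t}\|U(t)\|_\W$ and recalling $c_0\le M e^{0}\big(\|U_0\|_\W+\int_0^\tau e^{\omega s}|k(s)|\,\|f(s-\tau)\|_\W\,ds\big)$, and noting $Me^\gamma\ge c_0 e^{\gamma}/(\|U_0\|_\W+\cdots)$, gives
\[
\|U(t)\|_\W\le Me^{\gamma}\Big(\|U_0\|_\W+\int_0^{\tau}e^{\omega s}|k(s)|\,\|f(s-\tau)\|_\W\,ds\Big)e^{-(\omega-\omega'-ML(C_\rho))t},
\]
which is \eqref{stimaesponenziale}; the exponent is negative because $L(C_\rho)<(\omega-\omega')/M$ by (I). The main obstacle is the rigorous treatment of the delayed term $z(s-\tau)$ inside the Gronwall step: one must either iterate on successive intervals $[n\tau,(n+1)\tau]$ or verify that the sup-based estimate can be converted back to the pointwise exponential bound without losing the sharp rate, and this is where care with the shift $s\mapsto s+\tau$ and with the splitting at $s=\tau$ matters most.
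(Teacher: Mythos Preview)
Your approach is exactly the paper's: Duhamel, split the delay integral at $s=\tau$, change variables $s\mapsto s+\tau$, set $\tilde u(t)=e^{\omega t}\|U(t)\|_{\W}$, and apply Gronwall together with \eqref{assumption_delay}. The computation and the final estimate are correct.

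There is, however, one misstated justification. You write that passing to
\[
z(t)\le c_0+\int_0^{t}\big(Mb^2 e^{\omega\tau}|k(s+\tau)|+ML(C_\rho)\big)z(s)\,ds
\]
``is legitimate after replacing $z(s-\tau)$ by $z(s)$ \dots\ using $z$ is nondecreasing,'' and you flag this as the main obstacle, possibly requiring iteration on intervals $[n\tau,(n+1)\tau]$. None of this is needed, and the monotonicity claim is unfounded: $z$ need not be nondecreasing. After your own substitution you already have
\[
Mb^2 e^{\omega\tau}\int_0^{t-\tau}|k(s+\tau)|\,z(s)\,ds,
\]
with $z(s)$, not $z(s-\tau)$. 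Since the integrand is nonnegative, this is trivially $\le Mb^2 e^{\omega\tau}\int_0^{t}|k(s+\tau)|\,z(s)\,ds$, which combines with the $ML(C_\rho)\int_0^t z(s)\,ds$ term to give the Gronwall inequality directly. This is precisely what the paper does in one line; there is no delayed argument left inside the Gronwall step, and hence no ``monotonicity bookkeeping'' or interval iteration is required. Drop the $\bar z$ detour and the closing caveat paragraph, and the proof is complete.
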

\begin{proof}
From Duhamel's formula we obtain
$$
\begin{array}{l}
\displaystyle{||U(t)||_\W\leq Me^{-\omega t} ||U_0||_\W+Me^{-\omega t} \int_0^t e^{\omega s}|k(s)| \cdot ||\mathcal BU(s-\tau)||_\W ds}\\
\hspace{1.7 cm}
\displaystyle{ +ML(C_\rho)e^{-\omega t} \int_0^t e^{\omega s} ||U(s)||_\W ds,}
\end{array}
$$
where we have used the fact that $||F(U(t))||_\W\leq L(C_\rho) ||U(t)||_\W$ for any $t\geq 0$. Then, we get
$$
\begin{array}{l}
\displaystyle{ ||U(t)||_\W\leq Me^{-\omega t} ||U_0||_\W+Me^{-\omega t} \int_0^{\tau} e^{\omega s} |k(s)| \cdot ||f(s-\tau)||_\W ds}\\
\hspace{1.7 cm}
\displaystyle{+ Me^{-\omega t} \int_{\tau}^t e^{\omega s} b^2 |k(s)|\cdot ||U(s-\tau)||_\W  ds +ML(C_\rho)e^{-\omega t} \int_0^t e^{\omega s} ||U(s)||_\W ds.}
\end{array}
$$
By change of variables $s-\tau=z$ we infer
$$
\begin{array}{l}
\displaystyle{ e^{\omega t}||U(t)||_\W \leq M||U_0||_\W+M\int_0^{\tau} e^{\omega s} |k(s)| \cdot ||f(s-\tau)||_\W ds}\\
\hspace{2 cm}
\displaystyle{+M b^2 e^{\omega \tau} \int_0^t  |k(z+\tau)| e^{\omega z} ||U(z)||_\W dz+ ML(C_\rho) \int_0^t e^{\omega s} ||U(s)||_\W ds.}
\end{array}
$$
Now, let us denote $\tilde u (t):=e^{\omega t} ||U(t)||_\W$. Hence,
$$
\begin{array}{l}
\displaystyle{ \tilde u(t)\leq M||U_0||_\W+M \int_0^{\tau} e^{\omega s} |k(s)| \cdot ||f(s-\tau)||_\W ds}\\
\hspace{2 cm}
\displaystyle{+\int_0^t \left[ M b^2 e^{\omega \tau}  |k(s+\tau)| +ML(C_\rho)\right] \tilde u (s) ds.}
\end{array}
$$
Then, Gronwall estimate implies
$$
\tilde u (t)\leq \left( M||U_0||_\W+M\int_0^{\tau} e^{\omega s} |k(s)| \cdot ||f(s-\tau)||_\W ds\right) e^{ M b^2 e^{\omega \tau} \int_0^t |k(s+\tau)| ds  +ML(C_\rho)t}.
$$
Then, by definition of $\tilde u$ and by assumption \eqref{assumption_delay}, we get \eqref{stimaesponenziale}.
\end{proof}
Therefore, in order to have the exponential stability of solutions to \eqref{modello}, we need to show that the well-posedness assumption $(I)$ holds true for system \eqref{modello}. To do this, we introduce the following energy functional associated to system \eqref{modello}:
$$
E(t):=\frac 12 ||u_t(t)||_H^2+\frac 12 ||A^{\frac 12}u(t)||_H^2-\psi(u(t))+\frac 12  \int_{t-\tau}^t |k(s+\tau)|\cdot ||B^* u_t(s)||_{W_2}^2 ds.
$$
We immediately have the following result.
\begin{Proposition}\label{proposition}
Let $u$ be a solution to \eqref{modello}. If $E(t)\geq \frac 14 ||u_t(t)||_H^2$ for any $t\geq 0$, then
\begin{equation}\label{tesi_prop}
E(t)\leq \bar C(t) E(0),
\end{equation}
for any $t\geq 0$, where 
\begin{equation}\label{Cbar}
\displaystyle{
\bar C (t):= e^{2\int_0^t b^2 (|k(s)|+|k(s+\tau)|) ds}}.
\end{equation}
\end{Proposition}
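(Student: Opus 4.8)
The plan is to differentiate the energy $E$ along the trajectory, use the equation \eqref{modello} to eliminate $u_{tt}$, and arrive at a differential inequality of the form $E'(t)\le 2b^2\big(|k(t)|+|k(t+\tau)|\big)E(t)$, from which \eqref{tesi_prop} with $\bar C$ as in \eqref{Cbar} follows by Gronwall's lemma. Since $k$ is merely in $L^1_{loc}$ and solutions of \eqref{modello} are a priori only mild, I would first perform the differentiation for regular data (say $u_0\in\mathcal D(A)$, $u_1\in\mathcal D(A^{\frac12})$ and correspondingly smooth history $g$), for which the manipulations below are legitimate classically, and then extend \eqref{tesi_prop} to general data by density and continuity; equivalently, one verifies that $t\mapsto E(t)$ is absolutely continuous and applies Gronwall's lemma in that setting.

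First I would compute $E'(t)$. From \eqref{modello},
$$\langle u_{tt}(t),u_t(t)\rangle_H=-\langle A^{\frac12}u(t),A^{\frac12}u_t(t)\rangle_H-\|C^*u_t(t)\|_{W_1}^2-k(t)\langle B^*u_t(t-\tau),B^*u_t(t)\rangle_{W_2}+\langle\nabla\psi(u(t)),u_t(t)\rangle_H,$$
while $\frac{d}{dt}\big(\tfrac12\|A^{\frac12}u(t)\|_H^2-\psi(u(t))\big)=\langle A^{\frac12}u(t),A^{\frac12}u_t(t)\rangle_H-\langle\nabla\psi(u(t)),u_t(t)\rangle_H$ and $\frac{d}{dt}\big(\tfrac12\int_{t-\tau}^t|k(s+\tau)|\,\|B^*u_t(s)\|_{W_2}^2\,ds\big)=\tfrac12|k(t+\tau)|\,\|B^*u_t(t)\|_{W_2}^2-\tfrac12|k(t)|\,\|B^*u_t(t-\tau)\|_{W_2}^2$. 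Adding these three contributions, the $\langle A^{\frac12}u,A^{\frac12}u_t\rangle_H$ and $\langle\nabla\psi(u),u_t\rangle_H$ terms telescope away and one is left with
$$E'(t)=-\|C^*u_t(t)\|_{W_1}^2-k(t)\langle B^*u_t(t-\tau),B^*u_t(t)\rangle_{W_2}+\tfrac12|k(t+\tau)|\,\|B^*u_t(t)\|_{W_2}^2-\tfrac12|k(t)|\,\|B^*u_t(t-\tau)\|_{W_2}^2.$$

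Next I would discard the nonpositive damping term $-\|C^*u_t(t)\|_{W_1}^2$ and bound the cross term by Young's inequality, $|k(t)|\,\big|\langle B^*u_t(t-\tau),B^*u_t(t)\rangle_{W_2}\big|\le\tfrac12|k(t)|\,\|B^*u_t(t-\tau)\|_{W_2}^2+\tfrac12|k(t)|\,\|B^*u_t(t)\|_{W_2}^2$; the two occurrences of $\|B^*u_t(t-\tau)\|_{W_2}^2$ cancel, leaving $E'(t)\le\tfrac12\big(|k(t)|+|k(t+\tau)|\big)\|B^*u_t(t)\|_{W_2}^2$. Using $\|B^*u_t(t)\|_{W_2}\le b\,\|u_t(t)\|_H$ from \eqref{normab}, and then the standing hypothesis $\|u_t(t)\|_H^2\le 4E(t)$, I obtain $E'(t)\le 2b^2\big(|k(t)|+|k(t+\tau)|\big)E(t)$. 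Gronwall's lemma then gives $E(t)\le E(0)\exp\!\big(2\int_0^t b^2(|k(s)|+|k(s+\tau)|)\,ds\big)=\bar C(t)E(0)$, which is \eqref{tesi_prop}.

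The algebraic content above — the telescoping of the $A^{\frac12}u$ and $\nabla\psi(u)$ terms, the sign of $-\|C^*u_t\|_{W_1}^2$, the Young estimate and the cancellation of the delayed-norm terms — is elementary. I expect the only genuine obstacle to be the rigorous justification of the differentiation of $E$ under the low regularity of $k$ and of the initial data; this is handled by the density/absolute-continuity argument indicated above, noting that the assumed regularity of the history $g$ makes the delay integral in $E$ finite and that $\bar C$ is continuous because $k\in L^1_{loc}$.
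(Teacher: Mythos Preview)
Your proof is correct and follows essentially the same route as the paper: differentiate $E$, use the equation to cancel the $A^{\frac12}u$ and $\nabla\psi(u)$ terms, drop $-\|C^*u_t\|_{W_1}^2$, apply Young's inequality to the cross term so that the delayed norms cancel, use $\|B^*u_t\|_{W_2}\le b\|u_t\|_H$ and the hypothesis $\|u_t\|_H^2\le 4E(t)$, and conclude by Gronwall. Your additional remarks on justifying the differentiation via regular data and density are a welcome refinement that the paper leaves implicit.
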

\begin{proof}
Differentiating $E$ in time yields
$$
\begin{array}{l}
\displaystyle{\frac{d}{dt}E(t)=\langle u_t(t),u_{tt}(t)\rangle_H+\langle A^{\frac 12} u(t),  A^{\frac 12} u_t(t)\rangle_H-\langle \nabla \psi(u(t)), u_t(t)\rangle }\\
\hspace{2 cm}
\displaystyle{+\frac 12 |k(t+\tau)|\cdot ||B^*u_t(t)||^2_{W_2}-\frac 12  |k(t)|\cdot ||B^*u_t(t-\tau)||^2_{W_2}.}
\end{array}
$$
By using \eqref{modello}, we obtain 
$$
\begin{array}{l}
\displaystyle{\frac{d}{dt}E(t) =-||C^* u_t(t)||_{W_1}^2- k(t)\langle B^* u_t(t),B^*u_t(t-\tau)\rangle_{W_2}}\\
\hspace{2 cm}
\displaystyle{+\frac 12  |k(t+\tau)|\cdot ||B^*u_t(t)||_{W_2}-\frac 12  |k(t)|\cdot ||B^*u_t(t-\tau)||_{W_2}.}
\end{array}
$$
We use Young inequality in the second term in the previous equality and we get
$$
\begin{array}{l}
\displaystyle{ \frac{d}{dt}E(t) \leq \frac 12 \left( |k(t)|+|k(t+\tau)|\right) ||B^* u_t(t)||_{W_2}^2}\\
\hspace{2 cm}
\displaystyle{\leq 2 b^2 \left( |k(t)|+|k(t+\tau)|\right) \frac 14 ||u_t(t)||_H^2,}
\end{array}
$$ 
where for the last inequality we used \eqref{normab}. Since we have assumed $E(t)\geq \frac 14 ||u_t(t)||_H^2$ for any $t\geq 0$, we obtain
$$
\frac{d}{dt}E(t) \leq 2 b^2 \left( |k(t)|+|k(t+\tau)|\right)  E(t).
$$ 
By Gronwall inequality we then obtain \eqref{tesi_prop}.
\end{proof}

In order to prove the well-posedness assumption $(I)$ for system \eqref{modello}, we need the following two lemmas. 
\begin{Lemma}
\label{lemma1}
Let us consider the system \eqref{abstract_form} with initial data $U_0\in \W$ and  $f\in C([-\tau,0]; \W).$ Then, there exists a unique continuous local solution $U(\cdot)$ defined on a time  interval $[0,\delta)$, with $\delta \le\tau.$ 
\end{Lemma}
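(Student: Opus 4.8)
\emph{Plan of proof.} The key observation is that, since we only ask for a solution on an interval $[0,\delta)$ with $\delta\le\tau$, for every $t\in[0,\delta)$ we have $t-\tau\in[-\tau,0]$, so the delay term is completely determined by the history data: $\mathcal B U(t-\tau)=f(t-\tau)$. Hence on $[0,\delta)$ the problem \eqref{abstract_form} decouples from the delay and reduces to the non-delayed semilinear Cauchy problem
\begin{equation*}
U'(t)=\mathcal A U(t)+G(t)+F(U(t)),\qquad U(0)=U_0,
\end{equation*}
with the \emph{prescribed} forcing term $G(t):=-k(t)\,f(t-\tau)$, which belongs to $L^1([0,\tau];\W)$ because $k\in L^1_{loc}([0,+\infty))$ and $f\in C([-\tau,0];\W)$. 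I would then pass to the mild formulation via the semigroup $\{S(t)\}_{t\ge0}$ generated by $\mathcal A$,
\begin{equation*}
U(t)=S(t)U_0+\int_0^t S(t-s)G(s)\,ds+\int_0^t S(t-s)F(U(s))\,ds=:(\Phi U)(t),
\end{equation*}
and prove that $\Phi$ has a unique fixed point in a suitable complete metric space by Banach's contraction principle; that fixed point is, by construction, the local solution.

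Concretely, I would fix $R:=M\|U_0\|_\W+1$ and, for $\delta\in(0,\tau]$ to be chosen, consider the complete metric space
\begin{equation*}
\mathcal X_\delta:=\Big\{\,U\in C([0,\delta];\W)\ :\ \sup_{0\le t\le\delta}\|U(t)\|_\W\le R\,\Big\}
\end{equation*}
with the sup-distance. For $U\in\mathcal X_\delta$, using \eqref{decay_semigroup} one has $\|S(t)\|_{\mathcal L(\W)}\le M$ on $[0,\tau]$, while (F1)--(F2) give $\|F(U(s))\|_\W\le L(R)\|U(s)\|_\W\le L(R)R$, so that
\begin{equation*}
\|(\Phi U)(t)\|_\W\le M\|U_0\|_\W+M\int_0^\delta|k(s)|\,\|f(s-\tau)\|_\W\,ds+ML(R)R\,\delta .
\end{equation*}
Since $s\mapsto|k(s)|\,\|f(s-\tau)\|_\W$ lies in $L^1([0,\tau])$, the middle term tends to $0$ as $\delta\to0^+$, so $\delta$ can be chosen small enough that the last two terms are $\le 1$, whence $\Phi(\mathcal X_\delta)\subseteq\mathcal X_\delta$. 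Moreover, for $U,V\in\mathcal X_\delta$ the linear and forcing parts cancel and (F2) yields
\begin{equation*}
\|(\Phi U)(t)-(\Phi V)(t)\|_\W\le ML(R)\int_0^t\|U(s)-V(s)\|_\W\,ds\le ML(R)\,\delta\sup_{0\le s\le\delta}\|U(s)-V(s)\|_\W ,
\end{equation*}
so, shrinking $\delta$ further so that $ML(R)\,\delta<1$, $\Phi$ becomes a contraction on $\mathcal X_\delta$ and has a unique fixed point there.

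Finally I would record that this fixed point $U\in C([0,\delta];\W)$ is continuous by the integral representation itself: $t\mapsto S(t)U_0$ is continuous by strong continuity of the semigroup, $t\mapsto\int_0^t S(t-s)G(s)\,ds$ is continuous by dominated convergence (using $G\in L^1$), and likewise for the $F$-term; and that uniqueness holds not merely in the ball but in all of $C([0,\delta];\W)$, by a routine Gronwall argument applied to the difference of two solutions on any subinterval where both remain bounded. There is essentially no real obstacle here: the only mildly delicate point is that $k$ is only $L^1_{loc}$, so $G$ is merely integrable --- but all the estimates above are $L^1$ in time, and the smallness of $\int_0^\delta|k(s)|\,\|f(s-\tau)\|_\W\,ds$ that makes the self-mapping step work is just absolute continuity of the Lebesgue integral. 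The hypothesis $\delta\le\tau$ is used precisely to freeze the delay term as data; continuation of the solution past $\tau$ (carried out in the subsequent lemma) amounts to iterating this construction on successive intervals of length $\tau$.
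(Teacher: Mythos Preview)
Your proposal is correct and follows exactly the same approach as the paper: both observe that on $[0,\tau]$ the delay term $\mathcal BU(t-\tau)=f(t-\tau)$ is prescribed data, reducing \eqref{abstract_form} to a non-delayed semilinear problem with an $L^1$ forcing term, to which standard semigroup theory applies. The paper simply cites Pazy for the local existence step, whereas you spell out the underlying Banach fixed-point argument in detail; the substance is the same.
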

\begin{proof}
Since $t\in [0,\tau]$, we can rewrite the abstract system \eqref{abstract_form} as an undelayed problem:
\begin{eqnarray*}
U'(t)&=& \mathcal AU(t)-k(t) f(t-\tau)+F(U(t)), \quad t\in (0, \tau),\\
U(0)&=&U_0.
\end{eqnarray*}
Then, we can apply the classical theory of nonlinear semigroups (see e.g. \cite{Pazy}) obtaining the existence of a unique solution  on a set $[0,\delta)$, with $\delta \le\tau$.
\end{proof}
\begin{Lemma}
\label{lemma2}
Let $U(\cdot)$ be a non-null solution to \eqref{abstract_form} defined on the interval $[0, \delta),$ with $\delta\le\tau.$ Let $h$ be the strictly increasing function appearing in \eqref{stima_h}. Then,
\begin{enumerate}
\item if $h(||A^\frac{1}{2} u_0(0)||_H)<\frac 12,$ then $E(0)>0$;
\item if $h(||A^\frac{1}{2} u_0(0)||_H)<\frac 12$ and
$
h \left( 2 \bar{C}^\frac{1}{2}(\tau) E^\frac{1}{2}(0) \right) <\frac 12,
$
with $\bar{C}(\cdot)$ defined as in \eqref{Cbar}, then
\begin{equation}
\label{stima E dal basso}
\begin{array}{l}
\displaystyle{ E(t)>\frac{1}{4}||u_t(t)||_H^2+\frac{1}{4}||A^\frac{1}{2}u(t)||_H^2 +\frac{1}{4}\int_{t-\tau}^t |k(s+\tau)| \cdot ||B^*u_t(s)||_{W_2}^2 ds}
\end{array}
\end{equation}
for all $t\in[0, \delta)$. In particular,
\begin{equation}\label{J2}
E(t)>  \frac 14 \Vert U(t)\Vert_{\W}^2, \quad \mbox{for all} \ \ t\in [0, \delta).
\end{equation}
\end{enumerate}
\end{Lemma}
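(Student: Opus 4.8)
The argument would rest on a single preliminary scalar estimate controlling the potential: for every $v\in\mathcal D(A^{\frac12})$,
\[
|\psi(v)|\ \le\ \tfrac12\,h\bigl(\|A^{\frac12}v\|_H\bigr)\,\|A^{\frac12}v\|_H^2 .
\]
I would obtain it by writing $\psi(v)=\int_0^1\langle\nabla\psi(\sigma v),v\rangle_H\,d\sigma$ (legitimate by (H1), (H2) and $\psi(0)=0$), bounding the integrand by Cauchy--Schwarz in $H$, inserting \eqref{stima_h} applied to $\sigma v$, using monotonicity of $h$ to replace $h(\sigma\|A^{\frac12}v\|_H)$ by $h(\|A^{\frac12}v\|_H)$, invoking the Poincar\'e inequality $\|v\|_H\le\|A^{\frac12}v\|_H$, and integrating $\int_0^1\sigma\,d\sigma=\tfrac12$. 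In particular $h(\|A^{\frac12}v\|_H)<\tfrac12$ forces $\psi(v)\le\tfrac14\|A^{\frac12}v\|_H^2$, strictly when $v\neq0$. For part 1, taking $v=u_0(0)$ and using the standing assumption $h(\|A^{\frac12}u_0(0)\|_H)<\tfrac12$, substitution into the definition of $E$ gives
\[
E(0)\ \ge\ \tfrac12\|u_t(0)\|_H^2+\tfrac14\|A^{\frac12}u_0(0)\|_H^2+\tfrac12\!\int_{-\tau}^{0}\!|k(s+\tau)|\,\|B^*u_t(s)\|_{W_2}^2\,ds\ \ge\ 0 .
\]
If $E(0)=0$ then each term on the right vanishes, so $U(0)=0$ and $\int_{-\tau}^{0}|k(s+\tau)|\,\|B^*u_t(s)\|_{W_2}^2\,ds=0$; the second identity forces $k(\cdot)f(\cdot-\tau)\equiv0$ a.e.\ on $[0,\tau]$, so on $[0,\delta)$ the system \eqref{abstract_form} reduces to $U'=\mathcal AU+F(U)$, $U(0)=0$, whose unique solution (Lemma \ref{lemma1}, using $F(0)=0$) is $U\equiv0$---contradicting non-nullity. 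Hence $E(0)>0$.

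For part 2 the plan is a continuity (barrier) argument propagating $h(\|A^{\frac12}u(t)\|_H)<\tfrac12$ from $t=0$ to all of $[0,\delta)$. Set
\[
t^\ast:=\sup\bigl\{\,t\in[0,\delta):\ h(\|A^{\frac12}u(s)\|_H)<\tfrac12\ \text{ for all }s\in[0,t]\,\bigr\};
\]
since $s\mapsto A^{\frac12}u(s)$ and $h$ are continuous and $h(\|A^{\frac12}u(0)\|_H)<\tfrac12$, we have $t^\ast>0$. Suppose, for contradiction, that $t^\ast<\delta$; then by continuity and maximality $h(\|A^{\frac12}u(t^\ast)\|_H)=\tfrac12$. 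On $[0,t^\ast)$ the preliminary estimate yields $\psi(u(s))\le\tfrac14\|A^{\frac12}u(s)\|_H^2$, hence
\[
E(s)\ \ge\ \tfrac12\|u_t(s)\|_H^2+\tfrac14\|A^{\frac12}u(s)\|_H^2+\tfrac12\!\int_{s-\tau}^{s}\!|k(z+\tau)|\,\|B^*u_t(z)\|_{W_2}^2\,dz\ \ge\ \tfrac14\|u_t(s)\|_H^2 .
\]
Thus the hypothesis of Proposition \ref{proposition} holds on $[0,t^\ast)$ (its proof is a Gronwall argument on $[0,t]$, hence available for the local solution), giving $E(t)\le\bar C(t)E(0)\le\bar C(\tau)E(0)$ for every $t<t^\ast$. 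Letting $t\uparrow t^\ast$ and using continuity of $E$ and of $\bar C$ together with $E(s)\ge\tfrac14\|A^{\frac12}u(s)\|_H^2$, I get $\tfrac14\|A^{\frac12}u(t^\ast)\|_H^2\le E(t^\ast)\le\bar C(\tau)E(0)$, i.e.\ $\|A^{\frac12}u(t^\ast)\|_H\le 2\bar C^{\frac12}(\tau)E^{\frac12}(0)$. Monotonicity of $h$ and the second standing assumption of part 2 then give $h(\|A^{\frac12}u(t^\ast)\|_H)\le h\bigl(2\bar C^{\frac12}(\tau)E^{\frac12}(0)\bigr)<\tfrac12$, contradicting $h(\|A^{\frac12}u(t^\ast)\|_H)=\tfrac12$. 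Therefore $t^\ast=\delta$, i.e.\ $h(\|A^{\frac12}u(t)\|_H)<\tfrac12$ for all $t\in[0,\delta)$.

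With this established, for each $t\in[0,\delta)$ the preliminary estimate gives $\psi(u(t))\le\tfrac12 h(\|A^{\frac12}u(t)\|_H)\|A^{\frac12}u(t)\|_H^2$, and substituting into $E(t)$,
\[
E(t)-\tfrac14\|u_t(t)\|_H^2-\tfrac14\|A^{\frac12}u(t)\|_H^2-\tfrac14\!\int_{t-\tau}^{t}\!|k(s+\tau)|\,\|B^*u_t(s)\|_{W_2}^2ds\ \ge\ \tfrac14\|u_t(t)\|_H^2+\bigl(\tfrac14-\tfrac12 h(\|A^{\frac12}u(t)\|_H)\bigr)\|A^{\frac12}u(t)\|_H^2+\tfrac14\!\int_{t-\tau}^{t}\!|k(s+\tau)|\,\|B^*u_t(s)\|_{W_2}^2ds\ \ge\ 0,
\]
which is \eqref{stima E dal basso}; \eqref{J2} follows by discarding the nonnegative delay integral. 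The right-hand side above is a sum of three nonnegative terms and vanishes only if $u(t)=0$, $u_t(t)=0$ and the delay integral is zero, i.e.\ $E(t)=0$; such a zero is precluded by $E(0)>0$ together with the non-nullity of $U$ and uniqueness (as in part 1, using in addition the differential inequality $E'\le 2b^2(|k(t)|+|k(t+\tau)|)E$ from the proof of Proposition \ref{proposition}), so the inequality is in fact strict.

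The crux---and the step most likely to go wrong---is the apparent circularity in part 2: the coercivity $E\ge\tfrac14\|u_t\|_H^2$ needed to invoke Proposition \ref{proposition} is itself a consequence of $h(\|A^{\frac12}u\|_H)<\tfrac12$, which is exactly what one is trying to prove. The barrier argument at $t^\ast$ is the mechanism that breaks this loop; in running it, the two points requiring care are that Proposition \ref{proposition}'s Gronwall bound is genuinely available on the whole subinterval $[0,t^\ast)$ (not merely at its endpoint), and that the threshold $2\bar C^{\frac12}(\tau)E^{\frac12}(0)$ in the hypothesis of part 2 is precisely the one produced by combining the a priori bound $E(t)\le\bar C(\tau)E(0)$ with the coercivity $E(t)\ge\tfrac14\|A^{\frac12}u(t)\|_H^2$. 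The rest is bookkeeping with the definition of $E$.
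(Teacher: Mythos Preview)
Your proof is correct and follows essentially the same approach as the paper: the preliminary scalar estimate $|\psi(v)|\le\tfrac12 h(\|A^{1/2}v\|_H)\|A^{1/2}v\|_H^2$, then a continuation (barrier) argument combined with the Gronwall bound of Proposition~\ref{proposition}. The only cosmetic difference is the choice of barrier quantity: the paper tracks the set where the full inequality \eqref{stima E dal basso} holds and reaches a contradiction at its supremum $r$, whereas you track the set where $h(\|A^{1/2}u(t)\|_H)<\tfrac12$ and reach a contradiction at $t^\ast$; since each condition implies the other via the preliminary estimate, the two arguments are interchangeable.
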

\begin{proof}
First, from assumption (H3) on $\psi$ we can infer that
\begin{equation}
\label{assumptionPsi}
\begin{array}{l}
\displaystyle{|\psi(u)|\leq \int_0^1 |\langle \nabla \psi (su),u\rangle | ds} \\
\hspace{1,15 cm}
\displaystyle{\leq  ||A^\frac{1}{2}u||^2_H \int_0^1 h(s||A^\frac{1}{2}u||_H)s ds\leq \frac{1}{2} h(||A^\frac{1}{2}u||_H)||A^\frac{1}{2}u||^2_H.}
\end{array}
\end{equation}
Hence, under the assumption $h (\Vert A^{\frac 12} u_0(0)\Vert_H) < \frac {1} 2,$ we have that
\begin{equation}\label{27luglio}
\begin{array}{l}
\displaystyle{ E(0)=\frac{1}{2}||u_1||_H^2+\frac{1}{2}||A^\frac{1}{2}u_0(0)||_H^2-\psi(u_0(0))+\frac{1}{2}\int_{-\tau}^0 |k(s+\tau)|\cdot ||B^*u_t(s)||^2_{W_2} ds}\\
\hspace{0,9 cm}
\displaystyle{ >\frac{1}{4}||u_1||^2_H+\frac{1}{4}||A^\frac{1}{2}u_0(0)||^2_H +\frac{1}{4} \int_{-\tau}^0 |k(s+\tau)| \cdot ||B^*u_t(s)||^2_{W_2} ds. }\\
\end{array}
\end{equation}
Note that, if the right-hand side of \eqref{27luglio} is zero, then $U(\cdot)$ is the null solution. Therefore, we have proven claim 1.

In order to prove the second statement, we argue by contradiction. Let us denote
$$
r:=\sup \{ s\in [0,\delta) : \eqref{stima E dal basso} \quad \text{holds} \quad \forall t\in [0,s)\}.
$$
We suppose by contradiction that $r<\delta$. Then, by continuity, we have
\begin{equation}
\label{continuita}
\begin{array}{l}
\displaystyle{E(r)=\frac{1}{4}||u_t(r)||^2_H+\frac{1}{4}||A^\frac{1}{2}u(r)||_H^2+\frac{1}{4}\int_{r-\tau}^r |k(s+\tau)| \cdot ||B^*u_t(s)||_{W_2}^2 ds.}
\end{array}
\end{equation}
Now, since from \eqref{continuita}
$$
\frac{1}{4} \Vert A^{\frac 1 2} u(r)\Vert^2_H\leq E(r),
$$
we can infer, by using Proposition \ref{proposition}, that
\begin{equation}\label{risultato}
\begin{array}{l}
\displaystyle{ h(||A^\frac{1}{2}u(r)||_H)\leq h\left( 2 E^\frac{1}{2}(r)\right)  <h\left( 2\bar{C}^\frac{1}{2}(\tau)E^\frac{1}{2}(0)\right) <\frac{1}{2}.}
\end{array}
\end{equation}
Hence, we have that
$$
\begin{array}{l}
\displaystyle{ E(r)=
\frac{1}{2}||u_t(r)||_H^2+\frac{1}{2}||A^\frac{1}{2}u(r)||_H^2-\psi(u(r))+\frac{1}{2}\int_{r-\tau}^r|k(s+\tau)|\cdot ||B^*u_t(s)||^2_{W_2} ds}\\
\hspace{0.9 cm}
 \displaystyle{
>\frac{1}{4}||u_t(r)||_H^2+\frac{1}{4}||A^\frac{1}{2}u(r)||_H^2+\frac{1}{4}\int_{r-\tau}^r|k(s+\tau)| \cdot ||B^*u_t(s)||_{W_2}^2 ds,}
\end{array}
$$
where in the last estimate we used \eqref{assumptionPsi} and \eqref{risultato}. This contradicts the maximality of $r$. Hence, $r=\delta$ and this concludes the proof of the lemma.
\end{proof}
We are now ready to prove the well-posedness assumption for system \eqref{modello}.
\begin{Theorem}\label{wellpos_thm}
If hypothesis \eqref{assumption_delay} is satisfied, then problem \eqref{abstract_form}, with initial data $U_0\in \W$ and $f\in C([-\tau,0];\W),$  satisfies the well-posedness assumption (I). Hence, for solutions of \eqref{abstract_form} corresponding to sufficiently small initial data the exponential decay estimate \eqref{stimaesponenziale} holds true.
\end{Theorem}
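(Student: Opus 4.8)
The plan is to exhibit the pair $\rho,C_\rho$ demanded by hypothesis (I) and to glue together the local theory of Lemma~\ref{lemma1}, the energy coercivity of Lemma~\ref{lemma2}, the energy bound of Proposition~\ref{proposition}, and a continuity (bootstrap) argument kept under control by \eqref{assumption_delay}. First I fix $C_\rho>0$ with $L(C_\rho)<\frac{\omega-\omega'}{M}$, so that the exponent in \eqref{stimaesponenziale} is negative, and I record that, by \eqref{damp_coeff} and \eqref{Cbar}, $\bar C(\tau)$ is a fixed finite constant (indeed $\bar C(\tau)\le e^{4b^2K}$). Then I check that \eqref{well-posedness} with $\rho$ small produces all the smallness the later steps need: $\Vert U_0\Vert_\W<\rho$ (hence $h(\Vert A^{\frac12}u_0(0)\Vert_H)<\frac12$), and --- using \eqref{assumptionPsi} to absorb $\psi(u_0(0))$ and bounding the history contribution $\int_{-\tau}^0|k(s+\tau)|\,\Vert B^*u_t(s)\Vert_{W_2}^2\,ds$ in $E(0)$ by the integral in \eqref{well-posedness} --- that $E(0)\le c\,\rho^2$, small enough that $h\!\big(2\bar C^{\frac12}(\tau)E^{\frac12}(0)\big)<\frac12$ and $4\bar C(\tau)E(0)\le C_\rho^2$.

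Next I would run the local and energy machinery on $[0,\tau]$. By Lemma~\ref{lemma1} there is a unique solution on some $[0,\delta)$, $\delta\le\tau$; the two smallness conditions just arranged let the second part of Lemma~\ref{lemma2} apply, so $E(t)>\frac14\Vert U(t)\Vert_\W^2$ on $[0,\delta)$, and then Proposition~\ref{proposition} gives $E(t)\le\bar C(t)E(0)\le\bar C(\tau)E(0)$ there. Combining, $\Vert U(t)\Vert_\W^2<4\bar C(\tau)E(0)\le C_\rho^2$ on $[0,\delta)$; in particular $\Vert U\Vert_\W$ does not blow up, so the local solution extends to all of $[0,\tau]$, and by continuity $\Vert U(t)\Vert_\W<C_\rho$ and $E(t)>\frac14\Vert U(t)\Vert_\W^2$ hold on the closed interval $[0,\tau]$.

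Finally, global existence together with the uniform bound $C_\rho$ comes from the continuity argument. Iterating Lemma~\ref{lemma1} on $[\tau,2\tau],[2\tau,3\tau],\dots$ --- on each of which $k(t)\mathcal BU(t-\tau)$ is a known $L^1_{loc}$ forcing --- yields a unique maximal solution on $[0,T_{\max})$, global unless $\Vert U(t)\Vert_\W\to\infty$ as $t\to T_{\max}$. Put $T^*:=\sup\{t\in[0,T_{\max}):\Vert U(s)\Vert_\W\le C_\rho\ \text{for all}\ s\in[0,t)\}$; by the previous step $T^*\ge\tau>0$. On $[0,T^*)$ the bound $\Vert U(s)\Vert_\W\le C_\rho$ makes $\Vert F(U(s))\Vert_\W\le L(C_\rho)\Vert U(s)\Vert_\W$ available, so the Duhamel--Gronwall computation in the proof of Theorem~\ref{generaleCV} applies verbatim (using $\Vert\mathcal BU(s-\tau)\Vert_\W\le b^2\Vert U(s-\tau)\Vert_\W$ for $s\ge\tau$, $\mathcal BU(s-\tau)=f(s-\tau)$ for $s\le\tau$, assumption \eqref{assumption_delay}, and \eqref{damp_coeff} with \eqref{well-posedness} to bound $\int_0^\tau e^{\omega s}|k(s)|\,\Vert f(s-\tau)\Vert_\W\,ds\le e^{\omega\tau}\sqrt{K}\,\rho$), giving
$$\Vert U(t)\Vert_\W\le Me^{\gamma}\Big(\Vert U_0\Vert_\W+\int_0^\tau e^{\omega s}|k(s)|\,\Vert f(s-\tau)\Vert_\W\,ds\Big)e^{-(\omega-\omega'-ML(C_\rho))t}\le Me^{\gamma}\big(1+e^{\omega\tau}\sqrt{K}\big)\rho$$
for all $t\in[0,T^*)$. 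Shrinking $\rho$ once more so that $Me^{\gamma}(1+e^{\omega\tau}\sqrt{K})\rho<C_\rho$, this is a strict bound; hence $T^*=T_{\max}$ (otherwise $\Vert U(T^*)\Vert_\W<C_\rho$ by continuity and the bound would persist on a right neighbourhood, contradicting maximality of $T^*$), and then $\Vert U\Vert_\W$ bounded on $[0,T_{\max})$ forces $T_{\max}=+\infty$. Uniqueness on $[0,+\infty)$ follows by propagating the local uniqueness of Lemma~\ref{lemma1}. This is precisely (I), and the exponential decay \eqref{stimaesponenziale} for sufficiently small data then follows from Theorem~\ref{generaleCV}.

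I expect the crux to be this last reconciliation: Proposition~\ref{proposition} bounds $\Vert U(t)\Vert_\W$ only on compact time intervals --- $\bar C(t)$ grows, generically exponentially, because $k$ is merely $L^1_{loc}$ --- so it cannot by itself yield a global bound, whereas the decay estimate of Theorem~\ref{generaleCV} presupposes exactly the uniform bound $\Vert U(t)\Vert_\W\le C_\rho$ one is trying to prove; it is the continuity argument, with the bookkeeping supplied by \eqref{assumption_delay}, that breaks this circularity. A secondary technical point is the very first step: making ``\eqref{well-posedness} small'' genuinely entail ``$E(0)$ small'', which requires comparing the $\W$-norm of the prescribed history $f$ with the $W_2$-norm of $B^*u_t$ appearing in $E(0)$ and controlling $\psi(u_0(0))$ through \eqref{assumptionPsi}.
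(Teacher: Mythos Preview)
Your argument is correct and uses the same ingredients (Lemma~\ref{lemma1}, Lemma~\ref{lemma2}, Proposition~\ref{proposition}, and the Duhamel--Gronwall computation behind Theorem~\ref{generaleCV}), but the global step is organized differently from the paper. The paper first fixes an integer $N$ so that the constant $C_N$ in \eqref{stimaN} satisfies $C_N\le 1$, then uses the energy machinery step by step on $[0,\tau],[0,2\tau],\dots,[0,N\tau]$ --- each time invoking $\bar C(n\tau)\le \bar C(N\tau)$ --- to obtain $\Vert U(t)\Vert_\W\le C_\rho:=2\bar C^{1/2}(N\tau)\rho$ on $[0,N\tau]$; only then does it apply the decay estimate, and the choice of $N$ guarantees that the ``initial data'' at time $N\tau$ again satisfy the smallness bound, so the whole argument restarts on $[N\tau,2N\tau]$ and iterates block by block. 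You instead fix $C_\rho$ first (with $L(C_\rho)<(\omega-\omega')/M$), use the energy bounds only on $[0,\tau]$ to secure $T^*\ge\tau$, and then close with a single open--closed continuity argument driven directly by the exponential decay and the constraint $Me^{\gamma}(1+e^{\omega\tau}\sqrt{K})\rho<C_\rho$. Your route is a bit more streamlined --- no auxiliary integer $N$, no repeated appeal to $\bar C(n\tau)$ --- while the paper's block iteration has the minor advantage of making the reset mechanism (and hence the role of \eqref{assumption_delay}) very explicit. The technical point you flag at the end, namely comparing $\Vert f(s)\Vert_\W=\Vert Bg(s)\Vert_H$ with $\Vert B^*u_t(s)\Vert_{W_2}=\Vert g(s)\Vert_{W_2}$ in $E(0)$, is handled in the paper simply by declaring the concrete smallness hypothesis \eqref{dis1} (in terms of $g$) ``equivalent'' to \eqref{well-posedness} (in terms of $f$); this identification is exactly the issue you isolate, and neither proof addresses it beyond that.
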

\begin{proof}
Let us fix $N\in\nat$
such that
\begin{equation}\label{stimaN}
\begin{array}{l}
\displaystyle{C_N:=2M^2e^{2\gamma} \left( 1+K   e^{\omega \tau} b^2 \right) \left(1+e^{2\omega \tau}K\right) e^{-(\omega-\omega')N\tau}\leq 1.}
\end{array}
\end{equation}
Moreover let $\rho>0$ be such that
$$
\rho\leq \frac{1}{2\bar C (N\tau)}h^{-1}\left( \frac 12\right),
$$
and assume
\begin{equation}\label{dis1}
||u_1||_H^2+||A^{\frac 12}u_0(0)||_H^2 +\int_{-\tau}^0 |k(s+\tau)| \cdot ||g(s)||_{W_2}^2 ds \leq \rho^2.
\end{equation}
We observe that this is equivalent to (by considering the abstract formulation \eqref{abstract_form})
$$
||U_0||_{\W}^2 +\int_0^{\tau} |k(s)| \cdot ||f(s-\tau)||_{\W}^2 ds \leq \rho^2.
$$
First of all, from Lemma \ref{lemma1} we know that there exists a solution $u$ to \eqref{modello} on the time interval $[0,\delta)$ with $\delta\leq \tau$. Now, we have that
$$
h(||A^{\frac 12} u_0(0)||_H)<h(\rho) \leq h\left( \frac{1}{2\bar C ^{\frac 12} (N\tau)} h^{-1}\left( \frac 12\right)\right) <\frac 12,
$$
where we have used the fact that $\bar C (N\tau) >1$. 
 Hence, by Lemma \ref{lemma2}, $E(0)>0$. Moreover, from \eqref{assumptionPsi} we get
$$
\begin{array}{l}
\displaystyle{E(0)\leq \frac 12 ||u_1||_H^2+\frac 34 ||A^{\frac 12} u_0(0)||_H^2+\frac 12 \int_{-\tau}^0 |k(s+\tau)| \cdot ||g(s)||_{W_2}^2 ds\leq \rho^2,}
\end{array}
$$
which gives us
$$
h\left(2\bar C^{\frac 12} (N\tau)E^{\frac 12} (0)\right)< h\left(2\bar C^{\frac 12} (N\tau) \rho\right) <h\left(h^{-1}\left(\frac 12\right)\right) =\frac 12.
$$
Since $\bar C (N\tau)\ge \bar C (\tau),$ then 
\begin{equation}\label{29luglio2}
h\left(2\bar C^{\frac 12} (\tau)E^{\frac 12} (0)\right)\le h\left(2\bar C^{\frac 12} (N\tau)E^{\frac 12} (0)\right)<\frac 12.
\end{equation}
Hence, we can apply again Lemma \ref{lemma2} and we can infer that \eqref{J2} is satisfied for any $t\in[0,\delta)$. Then, we can use Proposition \ref{proposition} getting
\begin{equation}\label{29luglio}
\begin{array}{l}
\displaystyle{0<\frac 14 ||u_t(t)||_H^2+\frac 14 ||A^{\frac 12} u(t)||_H^2 }\\
\hspace{0.4 cm}
\displaystyle{+\frac 14  \int_{t-\tau}^t |k(s+\tau)| \cdot ||B^*u_t(s)||_{W_2}^2 ds< E(t)\leq  \bar C(\tau)E(0),}
\end{array}
\end{equation}
for any $t\in [0,\delta)$. Then, we can extend the solution in $t=\delta$ and on the entire interval $[0,\tau]$. Now, from \eqref{29luglio} and \eqref{29luglio2}, we obtain
$$
h(||A^{\frac 12} u(\tau)||_H)\le h(2E(\tau))\le h \left (2\bar C^{\frac 12}(\tau)E^{\frac 12}(0)\right ) <\frac 12.
$$
Therefore, there exists $\delta'\in(0,\tau]$ such that
$$
h(||A^{\frac 12} u(t)||_H)<\frac 12, \quad \forall \ t\in [\tau,\tau+\delta').
$$
So, arguing analogously to the proof of  Lemma \ref{lemma2}, one can obtain
$$E(t)>\frac 14 \Vert u_t(t)\Vert_H^2 + \frac 14 \Vert A^{\frac 12}u(t)\Vert_H^2+\frac 14\int_{t-\tau}^t \vert k(s+\tau)\vert \cdot \Vert B^*u_t(s)\Vert_{W_2}^2 ds,$$
for all $t\in [\tau,\tau+\delta').$ Then, 
$$
E(t)>\frac 14 ||u_t(t)||_H^2, \quad \forall \ t \in [\tau,\tau+\delta').
$$
Hence, we can apply once again Proposition \ref{proposition} which yields
$$
\begin{array}{l}
\displaystyle{0<\frac 14 ||u_t(t)||_H^2 +\frac 14 ||A^{\frac 12} u(t)||_H^2}\\
\hspace{0.4 cm}
\displaystyle{+\frac 14 \int_{t-\tau}^t |k(s+\tau)|\cdot ||B^*u_t(s)||_{W_2}^2 ds<E(t)\leq \bar C (2\tau) E(0)},
\end{array}
$$
where we used the fact that $\delta'\leq \tau$ and the monotonicity of $\bar C(\cdot)$. Then, as before, we can extend  the solution to the interval $[0,2\tau].$ Iterating this procedure, one can extend the solution  to the whole time interval $[0,N\tau]$ with $N$ satisfying \eqref{stimaN}. Now for $t=N\tau$ we have that
$$
\begin{array}{l}
\displaystyle{h(||A^{\frac 12} u(N\tau)||_H) \leq h(2E^{\frac 12} (N\tau))}\\
\hspace{3 cm}
\displaystyle{\leq h(2\bar C ^{\frac 12} (N\tau)E^{\frac 12} (0))\leq h(2\bar C ^{\frac 12} (N\tau)\rho)<\frac 12.}
\end{array}
$$
Moreover,
$$
\frac 14 ||U(t)||_{\W}^2\leq E(t)\leq \bar C (N\tau_{min})E(0)<\bar C(N\tau)\rho^2, \quad \forall \ t \in [0,N\tau],
$$
and so 
$$
||U(t)||_{\W}\leq C_\rho:= 2\bar C^{\frac 12} (N\tau)\rho,
$$
for any $t\in[0,N\tau]$. Now, eventually choosing smaller values of $\rho$, we suppose that $\rho$ is such that $L(C_\rho)<\frac{\omega-\omega'}{2M}$. Therefore, assumption $(I)$ is satisfied along the interval $[0,N\tau]$. Hence, Theorem \ref{generaleCV} gives us the following estimate:
\begin{equation}\label{1}
||U(t)||_{\W}\leq Me^{\gamma}  \left (||U_0||_{\W}+ \int_0^{\tau} e^{\omega s} |k(s)|\cdot ||f(s-\tau)||_{\W} ds\right )e^{-\frac{\omega -\omega'}{2}t},
\end{equation}
for any $t\in [0,N\tau]$.
By using assumption \eqref{damp_coeff} and H\"older inequality we get
$$
\begin{array}{l}
\displaystyle{ \int_0^{\tau} |k(s)|e^{\omega s} ||f(s-\tau)||_\W ds \leq e^{\omega \tau} \left( \int_0^{\tau} |k(s)| ds \right)^{\frac 12} \left( \int_0^{\tau} |k(s)|\cdot ||f(s-\tau)||_\W ^2 ds\right)^{\frac 12}}\\ \medskip
\hspace{4.65 cm}
\displaystyle{ \leq e^{\omega \tau} {K}^{\frac 12} \rho.}
\end{array}
$$
Therefore, from \eqref{1} we get
$$
||U(N\tau)||_\W^2\leq 2M^2\rho^2e^{2\gamma}\left( 1+e^{2\omega \tau}  K\right) e^{-(\omega-\omega')N\tau}.
$$
Moreover, if $s\in [N\tau,N\tau+\tau]$, then $ s-\tau\in[N\tau-\tau,N\tau]$. Then,
$$
\begin{array}{l}
\displaystyle{||U(N\tau)||_\W^2 + \int_{N\tau}^{N\tau+\tau} e^{\omega(s-N\tau)} |k(s)|\cdot ||B^* u_t(s-\tau)||_{W_2}^2 ds\leq  C_N\rho^2\leq \rho^2,}
\end{array}
$$
where we have used \eqref{stimaN}. Hence, we can infer that
$$
||U(N\tau)||_\W^2+\int_{N\tau}^{N\tau+\tau} |k(s)|\cdot ||B^* u_t(s-\tau)||_{W_2}^2 ds \leq \rho^2.
$$
We can proceed by applying a similar argument shown before on the interval $[N\tau, 2N\tau]$, obtaining a solution on the interval $[0,2N\tau]$. Iterating the process, we find a unique global solution to \eqref{abstract_form} satisfying the well-posedness assumption $(I)$. Hence, the theorem is proved. 
\end{proof}

\section{Examples}
\label{Examples}\hspace{5mm}

\setcounter{equation}{0}
In this section we give some applications of previous abstract well-posedness and stability results. We will show that the following systems can be rewritten in the abstract form \eqref{abstract_form} and so, under suitable assumptions, global existence and stability decay estimates hold for small initial data. 
\subsection{Wave equation with damping and source term}
Let $\Omega$ be an open subset of $\RR^d$, with boundary $\Gamma$ of class $C^2$ and let $\mathcal O \subset \Omega$ be an open subset which satisfies the geometrical control property in \cite{Bardos}. For instance, denoting by $m$ the standard multiplier $m(x)=x-x_0,$ $x_0\in \RR^d,$ as in \cite{Lions}, $\mathcal O$ can be the intersection of $\Omega$ with an open neighborhood of the set 
$$\Gamma_0=\left\{ \,x\in\Gamma\ :\ m(x)\cdot \nu(x)>0\,\right\}.$$
 Moreover, let $\tilde{\mathcal{O}}\subset\Omega$ be another open subset. Denoting by $\chi_D$ the characteristic function of a set $D,$ we consider the following wave equation
\begin{equation}\label{wave}
\begin{array}{l}
\displaystyle{u_{tt}(x,t)-\Delta u(x,t)+a\chi_{\mathcal{O}}(x) u_t(x,t)+k(t)\chi_{\tilde{\mathcal O}}(x)u_t(x,t-\tau)}\\
\hspace{7 cm}
\displaystyle{=u(x,t)|u(x,t)|^\beta, \quad (x,t)\in\Omega\times (0,+\infty),}\\
\displaystyle{u(x,t)=0, \quad (x,t)\in\Gamma\times (0,+\infty),}\\
\displaystyle{u(x,0)=u_0(x), \quad x\in\Omega,}\\
\displaystyle{u_t(x,s)=u_1(x,s) \quad (x,s)\in\Omega\times [-\tau,0],}
\end{array}
\end{equation}
where $a$ is a positive constant, $\tau>0$ is the time delay, $\beta >0$ and the delayed damping coefficient $k(\cdot):[0,+\infty)\to (0,+\infty)$ is a $L^1_{loc}([0,+\infty))$ function satisfying \eqref{damp_coeff}. System \eqref{wave} falls in the form \eqref{modello} with $A=-\Delta$ with dense domain $D(A)=H^2(\Omega)\cap H_0^1(\Omega)$. Setting $v(t)=u_t(t)$ and $U(t)=(u(t),v(t))^T$ for any $t\geq 0$, we can rewrite system \eqref{wave} in the abstract form \eqref{abstract_form}, with $\W=H_0^1(\Omega)\times L^2(\Omega)$,
$$
\mathcal A=\begin{pmatrix}
0 & Id \\
\Delta & -a\chi_{\mathcal O}
\end{pmatrix}
$$
and $\mathcal B$ and $F$ defined as
$$
\mathcal B \begin{pmatrix} u \\ v \end{pmatrix} = \begin{pmatrix} 0 \\ -\chi_{\tilde{\mathcal{O}}} v \end{pmatrix}, \qquad F(U(t))= \begin{pmatrix} 0 \\  u(t)|u(t)|^\beta \end{pmatrix}, \quad \forall\ t \geq 0.
$$
We know that $\mathcal{A}$ generates an exponentially stable $C_0$-semigroup $\{S(t)\}_{t\geq 0}$ (see e.g. \cite{K}), namely there exist $\omega,M>0$ such that
$$
||S(t)||_{\mathcal{L}(\mathcal H)} \leq Me^{-\omega t}, \quad \forall \ t\geq 0.
$$
Moreover, we consider the following functional:
$$\psi(u)=\frac{1}{\beta+2}\int_{\Omega} |u(x)|^{\beta+2} dx,$$
for any $u\in H_0^1(\Omega)$. For any $\beta\in \left( 0, \frac{4}{d-2}\right)$, $\psi$ is well-defined by Sobolev's embedding theorem. Furthermore, $\psi$ is G\^ateaux differentiable for any $u\in H_0^1(\Omega)$ with G\^ateaux derivative given by
$$
D\psi(u)(v)=\int_{\Omega} |u(x)|^\beta u(x)v(x)dx, \quad \forall \ v\in H_0^1(\Omega).
$$
As in \cite{ACS}, it is possible to show that $\psi$ satisfies hypotheses (H1), (H2) and (H3), provided that $\beta \in \left( 0, \frac{2}{d-2}\right]$. We define the following energy functional for any $t\geq 0$:

$$E(t):= \frac 12 \int_\Omega |u_t(x,t)|^2 dx+\frac 12 \int_{\Omega} |\nabla u(x,t)|^2 dx-\psi(u(x,t))+\frac 12 \int_{t-\tau}^t\int_{\tilde{\mathcal O}} |k(s+\tau)|\cdot |u_t(x,s)|^2 dx ds.
$$
Then, Theorem \ref{generaleCV} can be applied to system \eqref{wave}, under the assumption \eqref{assumption_delay}, obtaining well-posedness and stability results for small initial data.

\begin{Remark}{\rm
We want to emphasize that our result holds true for every pair of subsets $({\mathcal{O}}, \tilde{\mathcal{O}})$ of $\Omega.$ The only condition that we require is that 
${\mathcal{O}}$ satisfies a geometric control property. On the contrary, in \cite{JEE15, NicaisePignotti18, KP}, in the nonlinear setting, it is required $\tilde{\mathcal{O}}\subseteq {\mathcal{O}}.$ Same remark applies to the following example.
}
\end{Remark}
\subsection{Plate equation with damping and source term} 
Let $\Omega\subset\RR^d$ be an open subset with boundary $\Gamma$ of class $C^2$ and let $\mathcal O \subset \Omega$ be an open subset which satisfies the geometrical control property in \cite{Bardos}. Let $\tilde{\mathcal O}$ be another subset of $\Omega$. We consider the following plate equation:
\begin{equation}\label{plate}
\begin{array}{l}
\displaystyle{u_{tt}(x,t)+\Delta^2 u(x,t)+a\chi_{\mathcal{O}}(x) u_t(x,t)+k(t)\chi_{\tilde{\mathcal O}}(x)u_t(x,t-\tau)}\\
\hspace{7 cm}
\displaystyle{=u(x,t)|u(x,t)|^\beta, \quad (x,t)\in\Omega\times (0,+\infty),}\\
\displaystyle{u(x,t)=\frac{\partial u}{\partial \nu}(x,t)=0, \quad (x,t)\in\Gamma\times (0,+\infty),}\\
\displaystyle{u(x,0)=u_0(x), \quad x\in\Omega,}\\
\displaystyle{u_t(x,s)=u_1(x,s) \quad (x,s)\in\Omega\times [-\tau,0],}
\end{array}
\end{equation}
with $a, k(\cdot), \tau$ as before and $\beta>0.$ 
As for the previous case, one can rewrite system \eqref{plate} in the abstract form \eqref{abstract_form}. Now, since the nonlinear term satisfies hypotheses (H1)-(H2)-(H3) for $(d-4)\beta\le 4$ (cf. e.g. \cite{Mustafa}), then we can apply Theorem \ref{generaleCV} for $\beta$ in this range. Therefore, under assumption \eqref{assumption_delay}, for the model \eqref{plate} we have well-posedness and stability results for small initial data.


\begin{thebibliography}{10}

\bibitem{Ait}
{\sc E.M.~Ait Benhassi, K.~Ammari, S.~Boulite and L.~Maniar.}
\newblock {Feedback stabilization of a class of evolution equations with delay.}
\newblock {\em J. Evol. Equ.}, 9:103--121, 2009.


\bibitem{ACS}
{\sc F.~Alabau-Boussouira, P.~Cannarsa and D.~Sforza.}
\newblock {Decay estimates for second order evolution equations with memory.}
\newblock {\em J. Funct. Anal.}, 254:1342--1372, 2008.

\bibitem{AlNP}
{\sc F.~Alabau-Boussouira, S.~Nicaise and C.~Pignotti.}
\newblock Exponential stability of the wave equation with memory
 and time delay.
\newblock{\em New Prospects in Direct, Inverse and Control Problems for Evolution Equations}, Springer Indam Ser., 10:1--22, 2014.

\bibitem{Chentouf}
{\sc K.~Ammari and B.~Chentouf}
\newblock  Asymptotic behavior of a delayed wave equation without displacement term.
\newblock{\em 
Z. Angew. Math. Phys.}, 68 (5), Paper No. 117, 2017.

\bibitem{AG}
{\sc K.~Ammari and S.~Gerbi.}
\newblock {Interior feedback stabilization of wave equations with dynamic boundary delay.}
\newblock {\em Z. Anal. Anwend.}, 36:297--327, 2017.


\bibitem{ANP10}
{\sc K.~Ammari, S.~Nicaise and C.~Pignotti.}
\newblock Feedback boundary stabilization of wave equations with interior delay.
\newblock {\em Systems and Control Lett.},  59:623--628, 2010.

\bibitem{AM}
{\sc T.A.~Apalara and S.A.~Messaoudi.}
\newblock An exponential stability result of a Timoshenko system with thermoelasticity with second sound and in the presence of delay.
\newblock {\em Appl. Math. Optim.}, 71:449--472, 2015.



\bibitem{Bardos}
{\sc C.~Bardos, G.~Lebeau and J.~Rauch.}
\newblock Sharp sufficient conditions for the observation, control and stabilization of waves
from the boundary.
\newblock {\em SIAM J. Control Optim.}, 30 (1992), no. 5, 1024--1065.


\bibitem{Dai}
{\sc Q.~Dai and Z.~Yang.}
\newblock Global existence and exponential deacay of the solution for a viscoelastic wave equation
with a delay.
\newblock{\em
 Z. Angew. Math. Phys.}, 65:885--903, 2014.


\bibitem{guesmia}
{\sc A.~Guesmia.}
\newblock Well-posedness and exponential stability of an abstract evolution equation with infinite memory and time delay.
\newblock {\em IMA J. Math. Control Inform.,} 30:507--526, 2013.

\bibitem{K}
{\sc V.~Komornik.}
\newblock Exact controllability and stabilization. The multiplier method.
\newblock {\em Masson, Paris and John Wiley \& Sons}, Chicester, 1994.

\bibitem{KP}
{\sc V.~Komornik and C.~Pignotti}.
\newblock Energy decay for evolution equations with delay feedbacks.
\newblock  {\em Math. Nachr.,} to appear.

\bibitem{Lions}
{\sc J.L.~Lions.}
\newblock Exact controllability, stabilizability, and perturbations for distributed systems.
\newblock{\em SIAM Rev.}, 30:1--68, 1988.

\bibitem{Mustafa}
{\sc M.I.~Mustafa and M.~Kafini.}
\newblock Energy decay for viscoelastic plates with distributed delay and source term.
\newblock{\em
 Z. Angew. Math. Phys.}, 67, Paper No. 36, 2016.


\bibitem{NPSicon06}
{\sc S.~Nicaise and C.~Pignotti.}
\newblock Stability and instability results of the wave equation with a delay term in the boundary
or internal feedbacks.
\newblock {\em SIAM J. Control Optim.}, 45:1561--1585, 2006.

\bibitem{JEE15}
{\sc S.~Nicaise and C.~Pignotti.}
\newblock Exponential stability of abstract evolution equations with time delay.
\newblock {\em J. Evol. Equ.},  15:107--129, 2015.

\bibitem{NicaisePignotti18}
{\sc S.~Nicaise and C.~Pignotti.}
\newblock Well--posedness and stability results for nonlinear abstract evolution equations with time delays.
\newblock {\em J. Evol. Equ.}, {18} (2018), no. 2, 947--971.

\bibitem{Oquendo}
{\sc R.L.~Oliveira and H.P.~Oquendo}
\newblock Stability and instability results for coupled waves with delay term .
\newblock {\em  J. Math. Phys.}, 61, Art. 071505,  2020.


\bibitem{Pazy}
{\sc A.~Pazy}.
\newblock Semigroups of linear operators and applications to partial differential equations.
\newblock {\em Vol. 44 of Applied Math. Sciences. Springer-Verlag}, New York, 1983.

\bibitem{SCL12}
{\sc C.~Pignotti}
{\newblock A note on stabilization of locally damped wave equations with time delay},
{\em Systems and Control Lett.}, {61} (2012), no. 1, 92--97.

\bibitem{Said}
{\sc B.~Said-Houari and A.~Soufyane.}
\newblock  Stability result of the Timoshenko system with delay and
boundary feedback.
\newblock{\em IMA J. Math. Control Inform.}, 29:383--398, 2012.

\bibitem{XYL}
{\sc G.Q.~Xu, S.P.~Yung and L.K.~Li.}
\newblock  Stabilization of wave systems with input delay in the
boundary control.
\newblock{\em ESAIM Control Optim. Calc. Var.}, 12:770--785, 2006.


\end{thebibliography}
\end{document}